\newcommand{\R}{\mathbb{R}}
\newcommand{\rn}{\mathbb{R}^n}
\newcommand{\Rno}{\mathbb{R}^n\setminus\{0\}}
\newcommand{\N}{\mathbb{N}}
\newcommand{\dive}{\text{div}}
\newcommand{\Dxi}{\nabla_\xi}
\newcommand{\Du}{\nabla u}
\newcommand{\barx}{\overline x}
\newcommand{\bd}{\partial}
\newcommand{\tr}{\text{tr}}
\newcommand{\scal}[2]{\,#1\cdot#2\,}
\newcommand{\Ln}{\mathcal L^n}
\newcommand{\ep}{\varepsilon}
\numberwithin{equation}{section}
\newtheorem{theorem}{Theorem}[section]
\newtheorem{lemma}[theorem]{Lemma}
\begin{document}

\title{
Wulff shape symmetry of solutions to overdetermined problems for Finsler Monge-Amp\`ere equations
} \frenchspacing

\author{
Andrea Cianchi\\
{\it Dipartimento  di Matematica ``U. Dini'', Universit\`a di
Firenze}\\ {\it Viale Morgagni 67/A, 50134 Firenze, Italy, e-mail:
andrea.cianchi@unifi.it}
\bigskip
\\
Paolo Salani\\
{\it Dipartimento  di Matematica ``U. Dini'', Universit\`a di
Firenze}\\ {\it Viale Morgagni 67/A, 50134 Firenze, Italy, e-mail:
paolo.salani@unifi.it} }

\date{}


\maketitle

\begin{abstract}
We deal with  Monge-Amp\`ere type equations modeled upon general anisotropic norms $H$ in $\rn$. An overdetermined problem for convex solutions to these equations is analyzed. The relevant solutions are subject to both a homogeneous Dirichlet condition and 
 a second boundary condition, designed on $H$, on the gradient image of the domain. The Wulff shape symmetry associated with $H$ of the solutions is established.
\end{abstract}

\footnotetext {\par\noindent {\it Mathematics Subject
Classification: 35J06, 35J96.}
\par\noindent {\it Keywords:  Anisotropic Monge-Amp\`ere equation, overdetermined boundary conditions, symmetry of solutions, non-Euclidean norms, Wulff shape.}}

\section{Introduction and main result}\label{sec1}

The   standard Monge-Amp\`ere operator is formally defined, for a real-valued function on an open set $\Omega \subset \rn$, with $n \geq 2$, as
\begin{equation}\label{M}
Mu = \det (\nabla ^2 u).
\end{equation}
Here, $\nabla ^2 u$ denotes the Hessian $n\times n$--matrix of the second-order derivatives of $u$ and \lq\lq$\det$" stands for determinant. The operator $M$ lies at one endpoint of a family of so-called Hessian equations, whose opposite terminal is occupied by the Laplace operator. Besides the usual divergence form $\Delta u  ={ \rm div} (\nabla u)$, 
the latter can be expressed  as
\begin{equation}\label{delta}
\Delta u = \tr (\nabla ^2 u),
\end{equation}
where \lq\lq$\tr$" denotes trace.  

The structure of these classical operators
 is in a sense related to the use of the Euclidean norm in the ambient space $\rn$. For instance, the Laplacian emerges from the Euler equation of the Dirichlet energy integral,
defined in terms of the  Euclidean norm $|\nabla u|$ of the gradient $\nabla u$ of $u$.
 Replacing this norm with a  general anisotropic norm $H: \rn \to [0, \infty)$ results in the functional
\begin{equation}\label{dirichletH}
\int_\Omega E(\nabla u)\, dx,
\end{equation}
where $E: \rn \to [0, \infty)$ is the function given by
\begin{equation}\label{V}
E(\xi) = \frac{H(\xi)^2}2 \quad \text{for $\xi \in \rn$.}
\end{equation}
%
The metric in $\rn$ associated with a norm $H$ is called a Finsler metric in the literature.
The Finsler Laplacian $\Delta_H$ built upon $H$ is defined as the differential operator appearing in the  Euler equation for functional \eqref{dirichletH}.  In analogy with \eqref{delta}, it can be written in the form
\begin{equation}\label{deltaH}
\Delta_H u =      \tr \big(\nabla (\nabla_\xi E(\nabla u))\big).  
\end{equation}
Here and in what follows, the index $\xi$ for a differential operator denotes differentiation in the \lq\lq gradient variable". Definition \eqref{deltaH} is justified by its alternate  divergence form $\Delta_H u  = {\rm div} \big(H(\nabla u) H_\xi (\nabla u)\big)$, which surfaces in deriving the Euler equation of the functional \eqref{dirichletH}. With this regard, observe that
\begin{equation}\label{energy1}
-\int_\Omega u\nabla_H u\, dx = 2\int_\Omega E(\nabla u)\, dx
\end{equation}
provided that $u$ vanishes on $\partial \Omega$.
Plainly, definition \eqref{deltaH} recovers \eqref{delta} when $H$ is the Euclidean norm, since in this case $\nabla_\xi E(\xi)=\xi$ for $\xi \in \rn$.
%
\\
The operator $\Delta _H$ and its $p$-generalization, obtained analogously after replacing the exponent $2$ by any $p \in (1, \infty)$  in the functional \eqref{dirichletH},  have been  investigated under various respects. A sample of contributions on this subject is furnished by \cite{AFLT, Amar-Bellettini, AnCiFa, BeFeKa, DRSV1, DRSV2, FaLi1, FaLi2, FeK, KN, WaXi2}.

The Finsler the Monge-Amp\`ere operator $M_H$ 
 is  defined as
\begin{equation}\label{MH}
M_H u = \det  \big(\nabla (\nabla_\xi E(\nabla u))\big).
\end{equation}
Besides being suggested  by the mere replacement of the trace by the determinant on the right-hand side of equation \eqref{deltaH}, definition \eqref{MH} originates from the Euler equation of the  functional
\begin{equation}\label{energy}
\int_\Omega E(\nabla u)^{\frac {n+1}2}k_H(x)\, dx,
\end{equation}
where $k_H(x)$ denotes the Finsler Gauss curvature of the level set of $u$ at the point $x$.  This assertion can be verified via the identity
\begin{equation}\label{energyid}
- \int_\Omega  u \, M_Hu\, dx =   \frac {2^{\frac {n+1}2}}n
\int_\Omega E(\nabla u)^{\frac {n+1}2}k_H(x)\, dx,
\end{equation}
which holds for sufficiently smooth convex functions $u$ vanishing on $\partial \Omega$ and sufficiently smooth norms $H$.  When $H$ is the Euclidean norm, the integral on the left-hand side  is the customary energy functional of the Monge-Amp\`ere operator  and equation \eqref{energyid}  is classical.
Functional \eqref{energy} thus provides us with a natural generalization in the Finsler setting
and 
stands to the operator $M_H$ as  functional \eqref{dirichletH} stands to $\Delta_H$. 
We refer to \cite{DePGa, DePGaXi} for the derivation of equation \eqref{energyid} and properties of the operator $M_H$ in connection with ad hoc symmetrizations.

Formally, $ \det  \big(\nabla (\nabla_\xi E(\nabla u))\big)=
\det \big(\nabla_\xi^2 E(\nabla u) \nabla ^2 u\big)= \det \big(\nabla_\xi^2 E(\nabla u)\big)\det (\nabla ^2 u\big)$. Thereby,   $M_H$ can be regarded as a classical Monge-Amp\`ere operator with a gradient-depending coefficient, whose special structure depends on the norm $H$. We prefer to introduce it in the form \eqref{MH}, which allows for a definition of generalized solution in the sense of Alexandrov without any a priori regularity assumption on $H$.

Our focus here is on the symmetry of the solution to an overdetermined boundary value problem for the operator $M_H$. The interest in symmetry properties of solutions to overdetermined boundary value problems for partial differential equations was ignited half a century ago by the seminal paper  \cite{Serrin} by Serrin. A special case of his result concerns the Poisson equation, coupled with 
both a homogeneous Dirichlet condition and a constant Neumann condition at the boundary. It 
asserts that, if $\Omega$ is bounded and sufficiently smooth, and $c$ is any positive constant, then the problem
\begin{equation}\label{serrinpb}
\left\{\begin{array}{ll} \Delta u =1\quad&\text{in }\Omega\\
u=0 &\text{on }\bd\Omega
\\
|\nabla u| =c &\text{on }\bd\Omega
\end{array}\right.
\end{equation}
%
%
%
  admits a solution if and only if $\Omega$ is (up to translations and dilations) the  Euclidean unit ball $B$ centered at $0$. Moreover,
\begin{equation}\label{urad}
u(x)=\frac{|x|^2-1}{2}\qquad \quad\text{for } x\in
B.
\end{equation}

Over the years, Serrin's result has inspired a wealth of investigations on related questions -- 
%
see e.g. the surveys \cite{Magn, NiTr} on developments along this line of research. 
In particular, overdetermined boundary value problems for a family of Hessian-type equations are the subject of \cite{BNST}. 
The results of that paper include convex solutions to the Monge-Amp\`ere equation, with a constant right-hand side, on a bounded convex set $\Omega$ and subject to the same Dirichlet and Neumann boundary conditions as in  \eqref{serrinpb}. Observe that, because of the convexity of $\Omega$ and $u$, the latter condition is equivalent to requiring that
\begin{equation}\label{2condB}
\nabla u (\Omega) = B(c),
\end{equation}
where $B(c)$ denotes the Euclidean ball, centered at $0$, with radius $c$. This is a special case of the \lq\lq second boundary condition" in the theory of the Monge-Amp\`ere operator, so called as opposed to the Dirichlet boundary condition. In its general formulation, it amounts to imposing that 
$\nabla u (\Omega) = \Omega'$
for some bounded convex set $\Omega'$. 
It is also known as the \lq\lq natural boundary condition",  inasmuch as it arises naturally in the solution to the Monge-Kantorovich mass transportation problem.

The conclusion of \cite{BNST} is that, if the problem
\begin{equation}\label{CNST}
\left\{\begin{array}{ll} M u=1\quad&\text{in }\Omega\\
u=0 &\text{on }\bd\Omega
\\
\Du (\Omega) =B(c) 
\end{array}\right.
\end{equation}
admits a convex solution in a bounded convex domain $\Omega$, then (up to translations and dilations) the latter agrees with the Euclidean unit ball $B$, and the solution $u$ obeys \eqref{urad}. An alternate proof is offered in \cite{BNST1}.   An extension of this result to overdetermined problems for a class of fully nonlinear elliptic equations more general than that considered in  \cite{BNST1} can be found in \cite{SiSi} and rests upon a  different approach.

 The analysis of overdetermined problems in the Finsler ambient was initiated in \cite{Ci-Sa}, where 
a version of Serrin's theorem was established for (sufficiently smooth) general norms $H$. 
Loosely speaking, it  tells us that a symmetry result still holds, provided that the role of Euclidean balls is replaced by balls in the norm $H$ in the \lq\lq gradient variable" and balls in the 
the dual norm $H_0$ in the \lq\lq space variable". 
Balls according to the dual norm $H_0$ are usually said to have the Wulff shape associated with $H$. This terminology comes after G.Wulff, who, at the beginning of the last century, employed anisotropic geometric functionals built upon general norms $H$ in his mathematical theory of crystals \cite{Wu}. The functionals in question replace the standard perimeter of sets in $\rn$. They are defined as the integral, over the boundary of a set, of the function $H$ evaluated at the unit normal vector. Of course, the boundary and the unit normal vector of a set have to be properly defined according to geometric measure theory. Wulff-shaped balls are known to solve the corresponding isoperimetric problem among sets of prescribed Lebesgue measure \cite{Taylor}. 

More precisely, the paper  \cite{Ci-Sa} concerns the  problem
\begin{equation}\label{CiSa}
\left\{\begin{array}{ll} \Delta_H u =1\quad&\text{in }\Omega\\
u=0 &\text{on }\bd\Omega
\\
H(\nabla u) =c &\text{on }\bd\Omega.
\end{array}\right.
\end{equation}
 Under suitable regularity assumptions on the bounded domain $\Omega$, it asserts that a solution to problem \eqref{CiSa} exists if and only if $\Omega = B_{H_0}$  (up to translations and dilations). Moreover,   the solution $u$ is obtained on replacing the norm $|x|$  with $H_0(x)$ in equation \eqref{urad}. Here, $B_{H_0}$ denotes the unit ball, centered at $0$,  in the metric of $H_0$; balls with radius $c>0$  will be denoted by  $B_{H_0}(c)$.  Analogous notations are adopted for balls in the metric of $H$. 
%

In the last decade, further contributions appeared on overdetermined boundary value problems for the Finsler Laplacian and its $p$-Lapalcian version and, more generally,  on symmetry properties of solutions to problems involving these operators. A partial list  includes \cite{BaEn, Bi-Ci, Bi-Ci-Sa, CiFiRo, CoFaVa1, CoFaVa2, DeMaMiNe, DiPPoVa}. 

In the present paper, we complement the picture outlined above and show the Wulff shape symmetry of the solution to the Finsler Monge-Amp\`ere equation, when simultaneously subject to the homogeneous Dirichlet condition and a Wulff shape symmetric second boundary condition. This is the content of the following theorem. In its statement, the notation $C^2_+$ denotes the class of twice continuously differentiable functions whose Hessian matrix is everywhere positive definite.

\begin{theorem}\label{thmMAIN}
Let $\Omega$ be a  convex bounded open set in $\rn$.
Let $H$ be a norm in $\rn$
such that $H^2 \in C^2_+(\rn\setminus\{0\})$.  
Assume that there exists an Alexandrov convex solution $u$  
to the problem
\begin{equation}\label{eq85ter}
\left\{\begin{array}{ll} M_H u=1\quad&\text{in }\Omega\\
u=0 &\text{on }\bd\Omega
\\
\Du (\Omega) =B_H(c) 
\end{array}\right.
\end{equation}
for some constant  $c> 0$.
Then 
\begin{equation}\label{eq100}
\Omega=B_{H_0}\,
\end{equation}
and
\begin{equation}\label{apr14}
u(x)=\frac{H_0(x)^2-1}{2}\qquad \quad\text{for } x\in
B_{H_0},
\end{equation}
up to translations and dilations.
\end{theorem}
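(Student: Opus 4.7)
I would adapt the strategy of Brandolini-Nitsch-Salani-Trombetti \cite{BNST, BNST1} for the isotropic Monge-Amp\`ere overdetermined problem to the Finsler setting, taking the integral identity \eqref{energyid} as the backbone. As a first step I would verify that the candidate $w(x)=(H_0(x)^2-c^2)/2$ on $B_{H_0}(c)$ solves \eqref{eq85ter}: the Finsler Legendre duality $\nabla_\xi E(\nabla_x(H_0^2/2))=x$ yields $\nabla(\nabla_\xi E(\nabla w))=I$, hence $M_H w\equiv 1$, and the homogeneity computation $H(\nabla w(x))=H(H_0(x)\nabla H_0(x))=H_0(x)\,H(\nabla H_0(x))=H_0(x)$ gives $\nabla w(B_{H_0}(c))=B_H(c)$. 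The rescaling $u(x)\mapsto u(\lambda x)/\lambda^{2}$ reduces matters to the normalized case $c=1$.

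The next step combines two integral identities. Introducing the auxiliary map $T(x):=\nabla_\xi E(\nabla u(x))$, one has $M_H u=\det(\nabla T)$, and the Finsler duality $H_0(\nabla_\xi E(\xi))=H(\xi)$ shows that $\nabla_\xi E$ maps $B_H(1)$ onto $B_{H_0}$. The change of variables formula, combined with $M_H u=1$, then yields the volume identity $|\Omega|=|T(\Omega)|=|B_{H_0}|$. Substituting $M_H u=1$ into \eqref{energyid} gives
\[
-\int_\Omega u\,dx \;=\; \frac{2^{(n+1)/2}}{n}\int_\Omega E(\nabla u)^{(n+1)/2} k_H(x)\,dx .
\]
The plan is to rewrite this right-hand side, via the Finsler co-area formula, as a weighted integral in $t$ of the anisotropic (Wulff) perimeter of the sub-level sets $\{u\leq t\}$: heuristically, the quantity $E(\nabla u)^{(n+1)/2}k_H$ encodes the Jacobian of the Finsler Gauss map $\nabla u/H(\nabla u)$ times the correct homogeneity factor, so its slice integral over $\{u=t\}$ reproduces the Wulff perimeter of the gradient image. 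Applying the Wulff isoperimetric inequality on each sub-level set and integrating in $t$ should then yield a sharp inequality $-\int_\Omega u\,dx\ \geq\ \mathcal{F}(|\Omega|)$, with equality iff every $\{u\leq t\}$ is, up to translation, a dilated Wulff ball.

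The conclusion then follows from the rigidity of the Wulff isoperimetric inequality: a direct computation on the candidate $w$ (using that all its level sets are homothetic Wulff spheres) shows that equality is attained; hence the chain of inequalities saturates, forcing each $\{u\leq t\}$ to be a translated Wulff ball, and combined with the volume identity $|\Omega|=|B_{H_0}|$ this gives $\Omega=B_{H_0}$ and $u\equiv w$ up to translation. \emph{Main obstacle.} The decisive difficulty is the reformulation in the second paragraph: geometrically identifying $E(\nabla u)^{(n+1)/2}k_H$ with the Wulff perimeter integrand of gradient images of sub-level sets, so that a single application of the anisotropic isoperimetric inequality delivers both the bound and its rigidity. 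A secondary (but not minor) obstacle is justifying the Finsler co-area, change-of-variables, and identity \eqref{energyid} itself under the sole assumption of Alexandrov convexity of $u$, which forces one to work with Monge-Amp\`ere measures instead of classical pointwise Hessians.
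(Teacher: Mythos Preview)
Your route is genuinely different from the paper's, and the difference matters because the key logical step in your outline does not close.

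\medskip
\textbf{The gap.} After the volume identity $\Ln(\Omega)=\Ln(B_{H_0})$ and the energy identity \eqref{energyid}, you propose to bound the curvature integral below via the Wulff isoperimetric inequality and obtain
\[
-\int_\Omega u\,dx \;\geq\; \mathcal F(\Ln(\Omega)),
\]
with equality precisely when every sub-level set is a Wulff ball. You then say that the candidate $w$ attains equality, ``hence the chain of inequalities saturates''. This is a non sequitur. Knowing that \emph{some} competitor (the candidate $w$ on $B_{H_0}$) realises equality only shows that the inequality is sharp in general; it does \emph{not} show that \emph{our} unknown $u$ on \emph{our} unknown $\Omega$ realises equality. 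To force saturation you need the reverse bound
\[
-\int_\Omega u\,dx \;\leq\; \mathcal F(\Ln(\Omega)),
\]
and nothing in your scheme produces it. The quantity $-\int_\Omega u\,dx$ is not determined by $\Ln(\Omega)$ and the overdetermined data alone, so it cannot be ``computed'' from the data; a genuine second inequality is required. Even granting your ``main obstacle'' (the geometric identification of $E(\nabla u)^{(n+1)/2}k_H$ with a Wulff-perimeter integrand), the argument would still be one-sided.

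\medskip
\textbf{What the paper does instead.} The paper does not use the curvature functional \eqref{energyid} or any isoperimetric inequality. It produces \emph{two} opposite inequalities for the single number $-n\int_\Omega u\,dx$, both obtained from the matrix arithmetic--geometric mean (Newton) inequality $\tr(AB)\geq n(\det AB)^{1/n}$:
\begin{itemize}
\item Applied to $A=\Dxi^2E(\nabla u)$, $B=\nabla^2 u$, the equation $M_H u=1$ gives $\Delta_H u\geq n$ a.e.; integrating against $-u$ yields $-n\int_\Omega u\,dx\leq 2\int_\Omega E(\nabla u)\,dx=2\int_{B_{H_0}}E_0\,dy$.
\item Applied to $A=\nabla^2_\xi\widetilde u(\nabla E_0)$, $B=\nabla^2 E_0$, where $\widetilde u$ is the Legendre conjugate of $u$ (which solves the dual Monge--Amp\`ere problem on $B_H$), one gets $\mathrm{div}(\Dxi\widetilde u(\nabla E_0))\geq n$ a.e.\ in $B_{H_0}$; after an integration by parts and some exact identities linking $\int_\Omega u$ to integrals over $B_{H_0}$, this yields the reverse bound $-n\int_\Omega u\,dx\geq 2\int_{B_{H_0}}E_0\,dy$.
\end{itemize}
The two together force equality in Newton's inequality, i.e.\ $\Dxi^2E(\nabla u)\,\nabla^2 u=I$ a.e., whence $\Dxi E(\nabla u(x))=x-\bar x$ and the conclusion follows from $(\Dxi E)^{-1}=\nabla E_0$. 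The Legendre conjugate is precisely the device that manufactures the second, opposite inequality that your plan is missing.

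\medskip
Your secondary obstacle (regularity) is handled in the paper via Caffarelli's $C^{1,\alpha}$ theory and the global $W^{2,1}$ estimate of Savin--Yu for the second boundary value problem; this is what justifies the pointwise Newton inequality and the integrations by parts. Your first obstacle (rewriting the curvature integral as a Wulff-perimeter integral on slices) is real and, to my knowledge, not available in the literature in a form sharp enough to carry the rigidity; but even if it were, you would still need a mechanism for the reverse inequality.
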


As observed above, owing to the convexity of $\Omega$ and $u$, the last condition in problem \eqref{eq85ter} can be equivalently formulated as the boundary condition
$$H(\nabla u) =c  \qquad \text{on }\bd\Omega.$$

Our approach to Theorem \ref{thmMAIN} departs from the original technique employed in \cite{Serrin}, which is based on a variant of the method of moving planes. This method was introduced by Alexandrov in his proof of the symmetry of bodies whose boundary has constant mean curvature and, after \cite{Serrin}, it was adapted to the proof of a variety of symmetry results for PDEs. We instead resort to arguments ultimately rooted in an alternate proof of Serrin's result by Weinberger \cite{We} and later developed in \cite{BNST}. They rely upon integral identities and inequalities involving the solution $u$ to problem \eqref{eq85ter}. The overall idea is that overdetermination causes all the inequalities to hold as equalities. This piece of information forces  $\nabla u$ to agree with the gradient of the right-hand side of equation \eqref{apr14}, whence the conclusion follows.

Besides other ingredients, the derivation of the relevant integral relations makes use of duality arguments pertaining to the theory of convex functions and sets. Specific properties linking the norm $H$ with its dual $H_0$, and the solution $u$ with its Young conjugate $\widetilde u$ enter the game. The regularity of the solution $u$ is critical in substantiating several steps of the argument. The nowadays classical $C^{1,\alpha}$ regularity theory by Caffarelli, as well as the $W^{2,1}$ regularity theory more recently inaugurated by De Philippis and Figalli, play a key role in this connection.

The paper is organized as follows. We begin by gathering definitions and some properties of the functions $H$  and $H_0$. The notion of Legendre conjugate and its basic properties are also recalled. This is the content of Section \ref{prel}. Section \ref{sol} is devoted to a precise formulation of problem \ref{eq85ter}. The definition of its solution in the framework of the theory of Monge-Amp\`ere type equations is discussed in that section, where its regularity properties of use for our analysis are presented as well. The proof of Theorem \ref{thmMAIN} is then accomplished in Section \ref{proof}.

\section{Convex functions and norms}\label{prel}

\subsection{Legendre conjugate}
Let $\Omega$ be a convex set and let $u : \Omega \to \R$ be a convex function. Its Legendre conjugate $\widetilde u : \rn \to \R$ is defined as
\begin{equation}\label{legendre}
\widetilde u (\xi) = \sup \{x\cdot \xi-u(x)\,:\, x\in\Omega\} \qquad \text{for $\xi \in \rn$.}
\end{equation}
If the function $u\in C^1(\Omega)$ and is strictly convex, then $\widetilde u \in C^1(\nabla u(\Omega))$ and is strictly convex  (see \cite[Theorem 26.5]{Rock}).
Moreover the map $\Du:\Omega\to\Du(\Omega)$ is invertible and
\begin{equation}\label{gradu}
\Dxi\widetilde u = (\Du)^{-1}\,.
\end{equation}
The very definition of the function $\widetilde u$ entails that 
\begin{equation}\label{young}
x \cdot \xi \leq u(x) + \widetilde u (\xi) \quad \text{for $x\in \Omega$ and $\xi \in \rn$.}
\end{equation}
Moreover, if $u\in C^1(\Omega)$ and  is strictly convex, then
\begin{equation}\label{young=}
x \cdot \xi = u(x) + \widetilde u (\xi) \quad \text{if  either $x= \Dxi \widetilde u (\xi)$ or $\xi = \nabla u(x)$.}
\end{equation}

\subsection{Norms in $\rn$.}
A function $H: \rn \to [0, \infty)$ is a norm in $\rn$ if and only if:
\begin{equation}\label{eq300}
H\text{ is convex},
\end{equation}
\begin{equation}\label{eq301}
H(\xi)\geq 0\,\,\text{ for} \,\,\xi \in \rn, \text{ and
}\,\,H(\xi)= 0\,\,\text{ if and only if }\,\,\xi = 0\,,
\end{equation}
\begin{equation}\label{eq302}
H(t\xi)=|t|H(\xi)\,\text{ for }\xi\in\rn\text{ and }t\in\R\,.
\end{equation}
The dual norm $H_0$ of $H$ is given by
\begin{equation}\label{H0}
H_0(x)=\max_\xi\frac{\scal{x}{\xi}}{H(\xi)} \qquad \text{for $x \in \rn$,}
\end{equation}
where the dot \lq\lq$\,\cdot\,$" stands for scalar product in $\rn$.
Conversely, $H$ is the dual norm of $H_0$, since
\begin{equation}\label{eq304}
H(\xi)=\max_x\frac{\scal{x}{\xi}}{H_0(x)} \qquad \text{for $\xi \in \rn$.}
\end{equation}
As mentioned above, given $r>0$, we denote by $B_H(r)$ the open ball, centered at $0$ and with radius $r$, in the metric generated by the norm $H$. Namely,
$$B_H(r)= \{\xi \in \rn: H(\xi) < r\}.$$
The ball $B_{H_0}(r)$ is defined analogously. If $r=1$, we shall simply write $B_H$ and $B_{H_0}$.
\\
One has that
\begin{equation}\label{eq305}
H_0\in C^1(\Rno)\,\text{ if and only if  $B_H$ is strictly convex,}
\end{equation}
see \cite[Corollary 1.7.3]{Schneider}.
Of course, an analogous property holds on interchanging the roles of  $H$ and $H_0$.
\\
If $H\in C^1(\Rno)$, then
\begin{equation}\label{eq306}
\Dxi H(t\xi)=\text{sign}(t)\Dxi H(\xi)\quad\text{for }\xi\neq0\text{ and }t\neq0\,.
\end{equation}
This is a consequence of property (\ref{eq302}),
Moreover,
\begin{equation}\label{eq307}
\scal{\xi}{\Dxi H(\xi)}=H(\xi)\quad\text{for }\xi\in\rn\,,
\end{equation}
where the left-hand side is taken to be $0$ when $\xi=0$.
\\
 Assume that
   $H\in C^1(\Rno)$ and $B_H$
is strictly convex.  Then, by \cite[Lemma 3.1]{Ci-Sa},
\begin{eqnarray}
H_0(\Dxi H(\xi))=1&\quad\text{for }\xi\in\Rno\,\label{eq2},
\end{eqnarray}
and
\begin{eqnarray}
H(\nabla H_0(x))=1&\quad\text{for }x\in\Rno\,\label{eq4}.
\end{eqnarray}
Moreover, under the same assumptions, the map $H\Dxi H:\rn\to\rn$ is invertible and
\begin{equation}\label{eq5}
H\Dxi H=\left( H_0\nabla H_0\right)^{-1}\,.
\end{equation}
Here, and in what follows, $H\Dxi H$ and $H_0\nabla H_0$ are
continued by $0$ at $0$.
 \\
If $H\in C^2(\Rno)$, then
\begin{equation}\label{eq308}
\Dxi^2H(t\xi)=\frac{1}{|t|}\Dxi^2H(\xi)\quad\text{for }\xi\neq 0\text{ and }t\neq 0\,,
\end{equation}
and
\begin{equation}\label{eq309}
\Dxi^2H^2(t\xi)=\Dxi^2H^2(\xi)\quad\text{for }\xi\neq 0\text{ and }t\neq 0.
\end{equation}
Observe that the matrix-valued function $\Dxi^2H^2$ is discontinuous at $0$,
unless it is constant. Yet, it is bounded, and hence
\begin{equation}\label{may1}
\text{$H\Dxi H$ is Lipschitz continuous in $\rn$,}
\end{equation}
inasmuch as   $\Dxi
H^2=2H\Dxi H$. Of course, a parallel property holds 
for $H_0$, provided that $H_0\in C^2(\Rno)$, namely
\begin{equation}\label{may2}
\text{$H_0\nabla H_0$ is Lipschitz continuous in $\rn$. }
\end{equation}
The properties enjoyed by the functions $H$ and $H_0$ mentioned so far are reflected in properties of the function $E$ defined by equation \eqref{V} and of the function
 $E_0$ defined by replacing $H$ with $H_0$ in the same equation. In particular, note that, by equations \eqref{eq302} and \eqref{eq2}, 
\begin{equation}\label{aug331}
E_0(\nabla _\xi E(\xi)) = E(\xi)  \quad \text{for $\xi \in \rn$.}
\end{equation}
Thanks to equation \eqref{eq307},
\begin{equation}\label{aug330}
\xi \cdot \nabla_\xi E(\xi) = 2 E(\xi) \quad \text{for $\xi \in \rn$.}
\end{equation}
Furthermore, if  $H^2\in C^2_+(\Rno)$, i.e. $E\in C^2_+(\Rno)$, then  
\begin{equation}\label{may5}
\Dxi^2E(t\xi)=\Dxi^2E(\xi)\quad\text{for }\xi\neq 0\text{ and }t\neq 0.
\end{equation}
Hence, there exist constants $\Lambda >\lambda >0$ such that
\begin{equation}\label{may6}
\lambda \leq |\Dxi^2E(\xi)| \leq \Lambda \quad\text{for }\xi\neq 0,
\end{equation}
and
\begin{equation}\label{sep104}
\lambda \leq \det (\Dxi^2E(\xi)) \leq \Lambda \quad\text{for }\xi\neq 0.
\end{equation}
Moreover, since 
$$\Dxi E = H \Dxi H \quad \text{and}  \quad \nabla E_0 = H_0 \nabla H_0,$$
the maps
\begin{equation}\label{apr50}
\nabla _\xi E : \rn \to \rn \quad \text{and} \quad \nabla E_0  : \rn \to \rn \quad \text{are Lipschitz continuous,}
\end{equation}
 and 
\begin{equation}\label{may9}
(\nabla _\xi E)^{-1} =   \nabla E_0.
\end{equation}
Especially,
\begin{equation}\label{apr50bis}
\nabla _\xi E : B_H \to B_{H_0} \quad \text{and} \quad \nabla E_0  : B_{H_0}  \to B_H.
\end{equation}
One can also verify that $E$ and $E_0$ are mutual Legendre conjugates. 
\\ A property of  Legendre conjugation ensures that, if    $E\in C^2_+(\Rno)$, then 
\begin{equation}\label{V0c2}
E_0\in C^2_+(\Rno).
\end{equation}

\section{Properties of solutions to Finsler Monge-Amp\`ere equations}\label{sol}

Let $H$ be any norm in $\rn$. 
A convex function 
$u: \Omega \to \R$ is a generalized  solution in the sense of Alexandrov to the equation
\begin{equation}\label{gener}
 M_H=1\qquad \text{in }\Omega
\end{equation}
if
\begin{equation}\label{gener1}
\mathcal L^n\big(\nabla _\xi E(\nabla u(\omega))\big) =\mathcal L^n(\omega)
\end{equation}
for every Borel set $\omega\subset \Omega$. Here,  $ \nabla u$  and $\nabla _\xi E$ are regarded as multi-valued maps,  which are well defined as subgradients, since both $u$ and $E$ are convex functions.
\\  Assume now that $H$ satisfies the additional assumptions of Theorem \ref{thmMAIN}.  Then $u$ is a  solution  to equation \eqref{gener} in the sense of Alexandrov if and only if it is an Alexandrov  solution to the equation
\begin{equation}\label{transporteq1}
 \Phi (\nabla u) \det(\nabla^2u)=1\quad\text{in }\Omega,
\end{equation}
where $\Phi : \rn \to [0, \infty)$ is the function defined as
\begin{equation}\label{Phi}
\Phi(\xi) = \begin{cases} \det(\nabla_{\xi}^2E(\xi)) & \quad  \text{ if $\xi \neq 0$}
\\  \inf_{\{\eta \neq 0\}}  \det(\nabla_{\xi}^2E(\eta))  & \quad   \text{if $\xi =0$.}
\end{cases}
\end{equation}
Recall that a convex function $u$ is an Alexandrov solution to equation \eqref{transporteq1} if
\begin{align}\label{transporteq}
\int_{\nabla u(\omega)} \Phi (\xi)\, d\xi =  \Ln(\omega)
\end{align}
for every Borel set $\omega\subset \Omega$. In order to verify this equivalence, notice that, by property  \eqref{apr50},  the map $\nabla _\xi E : \rn  \to \rn$ is bi-Lipschitz continuous. A change of variables via this map  yields
\begin{align}\label{apr20}
\int_{\nabla u(\omega)} \Phi (\xi)\, d\xi = \int_{\nabla u(\omega)} \det(\Dxi ^2 E(\xi))\, d\xi = \int_{\nabla _\xi E(\nabla u(\omega))}\, dx = \Ln \big(\nabla _\xi E(\nabla u(\omega))\big)= \Ln(\omega)
\end{align}
for every Borel set $\omega\subset \Omega$. Hence, the claimed equivalence follows.
\\ Next, let $\Omega$ be a bounded convex domain and consider the problem obtained by coupling  equation \eqref{gener}
with the second boundary condition $\Du(\Omega)=B_{H}$. In view of the above remarks, it can be formulated as
\begin{equation}\label{transport}
\left\{\begin{array}{ll} \Phi (\nabla u) \det(\nabla^2u)=1\quad&\text{in }\Omega
\\
\Du(\Omega)=B_{H}.
\end{array}\right.
\end{equation}
Besides that introduced by Alexandrov, another (even weaker) definition of a solution to the equation in \eqref{transport} is available. It was introduced by Brenier in his fundamental work on the Monge-Kantorovich mass transportation problem \cite{Brenier}. 
A convex function $u$ is a solution to the equation \eqref{transport} in the sense  of Brenier  if 
\begin{equation}\label{brenier}
\int _{\Du(\Omega)} h(\xi) \Phi(\xi)\, d\xi = \int _\Omega h(\nabla u(x))\, dx
%
\end{equation}
for every continuous function $h : \Du(\Omega) \to \mathbb R$, where $\nabla u$ is now regarded as a function defined a.e. in $\Omega$. Since,
by equation \eqref{sep104}, there exist constants $0<\lambda \leq \Lambda$ such that 
\begin{equation}\label{sep106}
\lambda \leq \Phi(\xi) \leq \Lambda \quad \text{for $\xi \in\rn$,}
\end{equation}
  the result of  \cite{Brenier} guarantees that   problem \eqref{transport}     admits a unique solution in this sense,  provided that the compatibility condition (dictated by the choice $h=1$ in \eqref{brenier})
$$\mathcal L^n(\Omega) = \mathcal L^n(B_{H_0})$$
is fulfilled.
\\
A result from \cite{Caff1992} provides us with the following information:
\\ (i) The Brenier solution to problem \eqref{transport} 
 is also a  solution  in the sense of Alexandrov; hence, we may refer to the (unique) solution $u$ to problem \ref{transport} without further specification in what follows;
\\ (ii) The Legendre conjugate $\widetilde u$ of $u$  is the Brenier and the Alexandrov solution to the problem
\begin{equation}\label{transportbis}
\left\{\begin{array}{ll} \det(\nabla^2\widetilde u) = \Phi (\xi) \quad&\text{in } B_H
\\
\nabla \widetilde u(B_H)=\Omega.
\end{array}\right.
\end{equation}
Hence, 
\begin{equation}\label{brenierbis}
 \int _\Omega f(x)\, dx = \int _{B_{H}} f(\nabla \widetilde u (\xi)) \Phi(\xi)\, d\xi 
\end{equation}
for every continuous function $f: \Omega \to \mathbb R$, where $\nabla \widetilde u$ is regarded as a function defined a.e. in $\Omega$. Moreover,  
\begin{equation}\label{alexbis}
\int _{\varpi}  \Phi(\xi)\, d\xi = \Ln \big(\nabla \widetilde u (\varpi)\big)
\end{equation}
for every Borel set $\varpi\subset B_H$, where $\nabla \widetilde u$ is considered a multi-valued map;
\\  (iii) $u$ and $\widetilde u$ are strictly convex;
\\ (iv) $u\in C^{1,\alpha}(\overline \Omega)$ and $\widetilde u \in  C^{1,\alpha}(\overline {B_{H}})$ for some $\alpha >0$; consequently, $\nabla u$ and    $\nabla \widetilde u$  are, in fact, single-valued maps, and 
 $\nabla u : \Omega \to B_{H}$ and $\nabla \widetilde u  : B_{H}\to \Omega$ are inverses of each other.
\\ 
In particular, 
the function $u$ fulfills the equation in \eqref{transport} a.e. in $\Omega$, and $\widetilde u$ fulfills the equation in \eqref{transportbis} a.e. in $B_H$.
\\
 Let us also notice that, thanks to equation \eqref{may9},
\begin{equation}\label{apr21}
 \Phi(\nabla E_0) {\rm det}(\nabla^2 E_0)  =  \det(\nabla_{\xi}^2E(\nabla E_0)) \,{\rm det}(\nabla^2 E_0)  = 1 \quad \text{a.e in $B_{H_0}$,} 
\end{equation}
and, as a consequence of equation \eqref{brenierbis} and the change of variable formula for Lipschitz maps,
\begin{align}\label{change}
\int_{\Omega} f(x)\, dx&  = \int_{B_H} f(\nabla \widetilde u (\xi)) \Phi(\xi)\, d\xi = \int_{B_{H_0}}  f(\nabla \widetilde u (\nabla E_0(y))    \det(\nabla_{\xi}^2E(\nabla E_0(y)))\, {\rm det}(\nabla^2 E_0(y))\, dy\\ \nonumber & = \int_{B_{H_0}}  f(\nabla \widetilde u (\nabla E_0(y))) \, dy
\end{align}
for every continuous function $f: \Omega \to \mathbb R$. 
\\ Since the functions 
$$ \Dxi E (\Du) : \Omega \to B_{H_0} \quad \text{and} \quad \nabla \widetilde u (\nabla E_0) : B_{H_0} \to \Omega$$
are inverses of each other,  an application of equation \eqref{change} with $f=g(\nabla_\xi E(\nabla u))$ yields
\begin{align}\label{change1}
\int_{B_{H_0}}  g(y)\, dy = \int_{\Omega} g(\nabla_\xi E(\nabla u(x)))\, dx
\end{align}
for every continuous function $g : B_{H_0} \to \mathbb R$.
\\ Since the function $\widetilde u$ solves problem \eqref{transportbis} and the function $\Phi$ fulfills inequalities \eqref{sep106}, \cite[Theorem 1.1]{Savin-Yu} -- a  global version for second boundary value problems for  Monge-Amp\`ere  equations of results of \cite{DePh-Fi}, \cite{DFS} and \cite{Schm} --   ensures that 
\begin{equation}\label{second}
  \widetilde u \in W^{2,1}(B_H).
\end{equation}
Moreover, we claim that  
\begin{equation}\label{secondu}
 u \in W^{2,1}(\Omega).
\end{equation}
This is again a consequence of \cite[Theorem 1.1]{Savin-Yu}, once one has noticed that $u$ is also an Alexandrov  solution to the problem
\begin{equation}\label{transportnew}
\left\{\begin{array}{ll}  \det(\nabla^2u)= \psi (x)\quad&\text{in }\Omega
\\
\Du(\Omega)=B_{H},
\end{array}\right.
\end{equation}
where the function $\psi :\Omega \to [0, \infty)$, defined as 
$$\psi (x) = \frac{1}{\Phi(\nabla u(x))} \quad \text{for $x \in \Omega$,}$$
satisfies the inequalities
$\frac 1\Lambda \leq \psi (x) \leq \frac 1\lambda$  for $x\in \Omega$.
The fact $u$ is   an Alexandrov  solution to problem \eqref{transportnew} 
 amounts to the equation
\begin{equation}\label{sep110}
\mathcal L^n(\nabla u(\omega)) = \int_\omega \psi (x)\, dx
\end{equation}
being fulfilled 
for every Borel set $\omega \subset \Omega$. Equation \eqref{sep110} formally follows from  \eqref{brenier} with the choice
$$h(\xi)=  \frac{\chi_{\nabla u(\omega)}(\xi)}{\Phi (\xi)} \quad \text{for $\xi \in B_H$}.$$
 A rigorous proof can be accomplished through a standard approximation argument for $h$ via convolutions with smooth mollifiers. Note that passage to the limit is justified by the dominated convergence theorem. A role is also played by the fact that, as a consequence of equation \eqref{transporteq}, $\mathcal L^n((\nabla u)^{-1}(K))=0$ for any set $K\subset  B_H$ such that $\mathcal L^n(K)=0$.
\\ Let us mention that, in fact, \cite[Theorem 1.1]{Savin-Yu} ensures that the space $W^{2,1}$ can even be  replaced by $W^{2,1+\varepsilon}$ for some  $\varepsilon>0$  in equations \eqref{second} and \eqref{secondu}. This additional piece of information will however not be needed in our proofs.

\section{Proof of Theorem  \ref{thmMAIN}}\label{proof}

In this section, we accomplish the proof of Theorem  \ref{thmMAIN}. A couple of steps rely upon 
a generalized version of Newton's inequality for matrices.  Since we have not been able to locate the relevant inequality in the literature, we state it in a lemma at the end of the section and provide a proof for completeness.
 
\begin{proof}[Proof of Theorem \ref{thmMAIN}]
Up to rescaling in $x$ and $u$, we may assume that $c=1$ in problem \eqref{eq85ter}. Thus, in the light of the discussion in Section   \ref{sol}, we may assume that $u$ is the Alexandrov solution to the problem
\begin{equation}\label{pb}
\begin{cases}
 \Phi (\nabla u) \det(\nabla^2u)=1&\quad \text{in }\Omega
\\
u=0 & \quad \text{on }\bd\Omega
\\
E(\Du)=\tfrac 12 &\text{ on }\bd\Omega,
\end{cases}
\end{equation}
where $\Phi$ is the function defined by \eqref{Phi}.
\\ The proof is split into several steps. 

\smallskip
\par\noindent
\emph{Step 1.} We have that
\begin{equation}\label{E1}
\Ln (\Omega)= \Ln (B_{H_0})\,.
\end{equation}
This is a consequence of equation \eqref{change} with $f=1$.

\smallskip
\par\noindent
\emph{Step2}.
Owing to property \eqref{secondu}, we have that $u \in W^{2,1}(\Omega)$.  Moreover, by our assumptions on $E$ and property \eqref{apr50}, the function $\Dxi E \in C^1(\rn \setminus \{0\}) \cap {\rm Lip} (\rn)$. Hence, the results of \cite{Marcus-Mizel} on the composition of vector-valued Sobolev functions ensure that
 \begin{equation}\label{aug300}
\Dxi E(\nabla u) \in W^{1,1}(\Omega)
\end{equation}
 and 
 \begin{equation}\label{aug301}\nabla (\Dxi E(\nabla u)) = \Dxi^2E(\nabla u)\nabla^2u  \quad \text{ a.e. in $\Omega$.}
\end{equation}
  Also, inasmuch as the solution $u$ is strictly convex,  $\nabla u$ vanishes only at the unique minimum point of $u$, whence 
\begin{equation}\label{sep100}
\det(\Dxi^2E(\nabla u)\nabla^2u)= \Phi(\nabla u)\det (\nabla^2u) \quad \text{ a.e. in $\Omega$.}
\end{equation}
Incidentally, let us notice that equation \eqref{sep100} would hold even if $u$ were not convex, inasmuch as  $\nabla^2u$ vanishes a.e. on a set where $\nabla u$ is constant, provided that $u$ is just in  $W^{2,1}(\Omega)$.
\\ Since the matrix $\nabla^2u$ is positive semidefinite and the matrix $\Dxi^2E$ is positive definite,
thanks to the generalized version of Newton's inequality of Lemma \ref{newton} below, to equation \eqref{sep100},  
 and to  the equation in \eqref{pb},
\begin{equation}\label{E2}
\Delta_H u=\text{div}(\Dxi E(\nabla u))=\text{tr}(\Dxi^2E(\nabla u)\nabla^2u)\geq n\det(\Dxi^2E(\nabla u)\nabla^2u)^{1/n}=n \quad \text{a.e. in $\Omega$.}
\end{equation}
 Moreover,  equality holds in the inequality  in \eqref{E2} if and only if
\begin{equation}\label{eqcondition}
\Dxi^2E(\nabla u)\nabla^2u= I,
\end{equation}
where $I$ denotes the identity matrix in $\mathbb R^{n\times n}$.

\smallskip
\par\noindent
\emph{Step 3. } We have that
\begin{equation}\label{E3}
2\int_{B_{H_0}}E_0(y)dy \geq -n\int_\Omega u\,dx\,,
\end{equation}
and equality holds in the inequality if and only if equality \eqref{eqcondition} holds a.e. in $\Omega$.
\\
To prove this assertion, we   make use of equation \eqref{change1} to   obtain that
%
$$
\int_{B_{H_0}}E_0(y)dy=\int_\Omega E_0(\Dxi E(\Du))\,dx.
$$
Hence, thanks to equation \eqref{aug331},
%
\begin{equation}\label{E3.1}
\int_{B_{H_0}}E_0(y)dy=\int_\Omega E_0(\Dxi E(\Du))\,dx=\int_\Omega E(\Du)\,dx\,.
\end{equation}
The definition of $\Delta_H$, the divergence theorem, equation \eqref{aug330} and the  Dirichlet boundary condition in \eqref{pb} yield
 \begin{align}\label{apr30}
-\int_{\Omega}u\,\Delta_Hu\,dx&=-\int_{\Omega}\dive(u\nabla _\xi E(\nabla u))- \Du\cdot\Dxi E(\Du)\,dx
\\ \nonumber
&=2\int_{\Omega} E(\Du)dx- \int_{\partial \Omega}u\nabla _\xi E(\nabla u)\cdot\nu\,d\mathcal H^{n-1} =2\int_{\Omega} E(\Du)dx.
\end{align}
Hence, by equation \eqref{E2}, 
\begin{equation}\label{apr33}
-n \int_{\Omega} u \,dx \leq 2 \int_{\Omega} E(\Du)\,dx.
\end{equation}
Inequality \eqref{E3} follows from \eqref{apr33}, via equation \eqref{E3.1}. The assertion about the case of equality in \eqref{E3} is a consequence of the case of equality in    \eqref{E2}.

\smallskip
\par\noindent
\emph{Step 4. } Here, we show that
\begin{equation}\label{E4}
\int_\Omega u\,dx=-\int_{B_{H_0}}\widetilde u(\nabla E_0)\,dy+\int_{B_{H_0}}\Dxi\widetilde u(\nabla E_0)\cdot\nabla E_0\,dy,
\end{equation}
where $\widetilde u$ denotes the Legendre conjugate of $u$.
\\ Equation \eqref{E4} follows via equation \eqref{change}, which entails that
\begin{align}\label{apr5}
\int_{\Omega } u\,dx & = \int_{B_{H_0}} u(\Dxi \widetilde u (\nabla E_0))\, dy = 
 \int_{B_{H_0}} \Dxi\widetilde u (\nabla E_0)\cdot \nabla E_0 - \widetilde u (\nabla E_0)\, dy.
\end{align}

\smallskip
\par\noindent
\emph{Step 5.} We claim that
\begin{equation}\label{E5}
\int_{B_{H_0}}\widetilde u(\nabla E_0)\,dy = -(n+1)\int_\Omega u\,dx\,.
\end{equation}
Equation \eqref{E5} is a consequence of the following chain:
\begin{align}\label{apr6}
\int_{B_{H_0}}\widetilde u(\nabla E_0)\,dy & = \int_\Omega \widetilde u (\nabla u)\,dx = \int _\Omega \nabla u \cdot x - u \, dx = \int _\Omega {\rm div} (u x) -n u -u \, dx 
\\ \nonumber & =\int_{\partial \Omega} u x\cdot \nu \, d\mathcal H^{n-1}(x) - (n+1) \int_\Omega u\, dx = - (n+1) \int_\Omega u\, dx.
\end{align}
Note that the first equality holds 
thanks to equation \eqref{change} and to equation \eqref{gradu},
%
 the second one by equation \eqref{young=}, the fourth one by the divergence theorem, and the last one by the Dirichlet boundary condition in \eqref{pb}.

\smallskip
\par\noindent
\emph{
Step 6.} The following inequality holds:
\begin{equation}\label{E6}
\int_{B_{H_0}}\Dxi\widetilde u(\nabla E_0)\cdot\nabla E_0\,dy\geq\frac{n}{2}\left(\Ln (B_{H_0})-2\int_{B_{H_0}}E_0\, dy\right).
\end{equation}
By property \eqref{second}, one has that
$
\widetilde u \in W^{2,1} (B_H).
$
Also, properties \eqref{apr50} and \eqref{may9} ensure that
$
\nabla E_0 : B_ {H_0} \to \Omega$ is a by-Lipschitz map.
%
Hence, the change of variables formula for Sobolev functions 
tells us  that 
\begin{equation}\label{aug310}
 \Dxi\widetilde u (\nabla E_0) \in W^{1,1} (B_{H_0}),
\end{equation}
 and 
\begin{equation}\label{aug311} \nabla (\Dxi\widetilde u(\nabla E_0)) = \nabla ^2_\xi\widetilde u (\nabla E_0) \nabla ^2E_0 \quad \text{a.e. in $B_{H_0}$.}
\end{equation}
 Moreover, by equation \eqref{apr21},
\begin{equation}\label{sep101}
 {\rm det}(\nabla ^2_\xi\widetilde u (\nabla E_0) \nabla ^2E_0)= \displaystyle \frac{\det(\nabla^2\widetilde u(\nabla E_0))}{\Phi (\nabla E_0)} \quad \text{a.e. in $B_{H_0}$.}
\end{equation}
\\ 
An application of the divergence theorem yields 
\begin{align}\label{apr7}
\int_{B_{H_0}} \Dxi\widetilde u (\nabla E_0) \cdot \nabla E_0\, dy & = \int_{\partial B_{H_0}} E_0 \,\Dxi\widetilde u (\nabla E_0) \cdot \nu \, d\mathcal H^{n-1} - 
\int_{B_{H_0}}  E_0\, {\rm div}( \Dxi\widetilde u (\nabla E_0))\, dy
\\ \nonumber & 
 = \frac 12\int_{\partial B_{H_0}}  \,\Dxi\widetilde u (\nabla E_0) \cdot \nu \, d\mathcal H^{n-1} - 
\int_{B_{H_0}}  E_0\, {\rm div}( \Dxi\widetilde u (\nabla E_0))\, dy
\\ \nonumber & 
 =  \frac 12 \int_{B_{H_0}}   (1- 2 E_0)\, {\rm div}( \Dxi\widetilde u (\nabla E_0))\, dy.
\end{align}
Now, observe that  $1- 2E_0 \geq 0$ in $B_{H_0}$. Furthermore, 
\begin{align}\label{apr8}
 {\rm div}( \Dxi\widetilde u (\nabla E_0)) = {\rm tr}(\nabla ^2_\xi\widetilde u (\nabla E_0) \nabla ^2E_0) & \geq n \, {\rm det}(\nabla ^2_\xi\widetilde u (\nabla E_0) \nabla ^2E_0)
 =
 n \displaystyle \frac{\det(\nabla^2\widetilde u(\nabla E_0))}{\Phi (\nabla E_0)} = n  \quad \text{a.e. in $B_{H_0}$.}
%
%
\end{align}
Here, we have made use of equation \eqref{aug311}, of the version of Newton's inequality from Lemma \ref{newton},  of equation \eqref{sep101}, and of the equation in \eqref{transportbis}. Notice that the application of Lemma \ref{newton} is justified since the matrix $\nabla ^2_\xi\widetilde u$ is positive semidefinite and, by property \eqref{V0c2}, the matrix $\nabla ^2E_0$ is positive definite. 
\\
From equations \eqref{apr7} and \eqref{apr8} one deduces that 
\begin{align}\label{apr35}
\int_{B_{H_0}} \Dxi\widetilde u (\nabla E_0) \cdot \nabla E_0\, dy & \geq 
 \frac n2 \int_{B_{H_0}}   (1- 2E_0)\, dy.
\end{align}
This establishes inequality \eqref{E6}.

\smallskip
\par\noindent
\emph{
Step 7.} We have that
\begin{equation}\label{E8}
2\int_{B_{H_0}}E_0\,dy \leq -n\int_\Omega u\,dx\,.
\end{equation} 
To begin with, observe that 
merging \eqref{E5} and \eqref{E6} into \eqref{E4} implies that
\begin{equation}\label{E7}
-n\int_\Omega u\,dx\geq\frac{n}{2}\left(\Ln (B_{H_0})-2\int_{B_{H_0}}E_0\,dy\right)
\end{equation}
On the other hand
\begin{equation}\label{apr12}
\Ln (B_{H_0})=\frac{2(n+2)}{n}\int_{B_{H_0}}E_0\,dy\,,
\end{equation}
To verify this equality, notice that
\begin{align}\label{apr10}
\Ln (B_{H_0}) & = \int _{\{E_0(x)\leq \frac 12\}}dx 
= \int_0^{\frac 12} \int_{  \{E_0(x)=t\}}\frac {d\mathcal H^{n-1}(x)}{|\nabla E_0(x)|}\, dt 
\\ \nonumber & =
 \int_0^{\frac 12} \int_{  \{E_0(x/\sqrt t)=1\}}\frac {d\mathcal H^{n-1}(x)}{|\nabla E_0(x)|}\, dt 
=
 \int_0^{\frac 12} t^{\frac{n-1}2}\int_{  \{E_0(y)= 1\}}\frac {d\mathcal H^{n-1}(y)}{|\nabla E_0(y\sqrt t)|}\, dt 
\\ \nonumber & =
 \int_0^{\frac 12}t^{\frac{n-1}2-\frac 12}\int_{  \{E_0(y)= 1\}}\frac {d\mathcal H^{n-1}(y)}{|\nabla E_0(y)|}\, dt
= \frac 1{n2^{\frac n2-1}} \int_{ \{E_0=1\}}\frac {d\mathcal H^{n-1}}{|\nabla E_0|}\, dt,
\end{align}
where the second equality holds by the coarea formula, the third one since $E_0$ is homogeneous of degree $2$, the fourth one by the area formula, and the fifth one since the function $|\nabla E_0|$ is homogeneous of degree $1$. Analogously,
\begin{align}\label{apr11}
\int_{B_{H_0}}E_0\,dy 
= \int_0^{\frac 12} t\int_{ \{E_0=t\}}\frac {d\mathcal H^{n-1}}{|\nabla E_0|}\, dt =  \int_0^{\frac 12} t^{\frac n2}\int_{\{E_0=1\}}\frac {d\mathcal H^{n-1}}{|\nabla E_0|}\, dt =  \frac 1{(n+2)2^{\frac n2}} \int_{ \{E_0=1\}}\frac {d\mathcal H^{n-1}}{|\nabla E_0|}\, dt,
\end{align}
Combining equations \eqref{apr10} and \eqref{apr11} yields \eqref{apr12}. Inequality \eqref{E8} follows from equations \eqref{E7} and \eqref{apr12}.

\smallskip
\par\noindent
\emph{
Step 8.}
Conclusion. 
\\ Coupling  inequality \eqref{E8} with the reverse inequality \eqref{E3} implies that
\begin{equation}\label{apr13}
2\int_{B_{H_0}}E_0\,dy =-n\int_\Omega u\,dx\,.
\end{equation} 
This forces all the inequalities derived above to hold as equalities. Especially, equation
 \eqref{eqcondition} holds a.e. in $\Omega$. Namely,
$$\nabla (\nabla_\xi E(\nabla u))
=  I \quad \text{a.e. in $\Omega$.}$$
Hence, 
there exists $\barx\in\rn$ such that
\begin{equation}\label{apr15}
\nabla_\xi E(\nabla u) = x-\barx\quad\text{ for }x\in\Omega\,,
\end{equation}
and, by equation (\ref{may9}),
\begin{equation}\label{apr16}
\nabla u=\nabla E_0(x-\barx)
\quad \quad\text{ for }x\in\Omega.
%
%
%
\end{equation}
Equation \eqref{apr14} follows form (\ref{apr16}) and the Dirichlet boundary condition in \eqref{pb}.
\end{proof}

\begin{lemma}\label{newton}
Assume that the matrix  $A\in \R^{n\times n}$ is symmetric and positive definite and that the matrix  $B\in \R^{n\times n}$ is symmetric and positive semidefinite. Then
\begin{equation}\label{newton1}
(\det AB)^{\frac 1n} \leq  \frac{{\rm tr} (AB)}n.
\end{equation}
Moreover,   equality holds in \eqref{newton1} if and only if $AB= \lambda I$ for some constant $\lambda \geq 0$.
 \end{lemma}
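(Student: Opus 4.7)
The plan is to reduce inequality \eqref{newton1} to the classical arithmetic-geometric mean inequality by symmetrizing the product $AB$. The difficulty is that $AB$ need not itself be symmetric, so its eigenvalues are not obviously real and nonnegative; however, since $A$ is symmetric positive definite it admits a unique symmetric positive definite square root $A^{1/2}$, and the identity
\begin{equation*}
AB = A^{1/2}\bigl(A^{1/2} B A^{1/2}\bigr) A^{-1/2}
\end{equation*}
exhibits $AB$ as similar to the symmetric matrix $C := A^{1/2} B A^{1/2}$. Hence $\det(AB)=\det C$ and $\tr(AB)=\tr C$, and moreover for every $v\in\rn$ one has $v\cdot C v = (A^{1/2}v)\cdot B(A^{1/2}v)\geq 0$ by the positive semidefiniteness of $B$, so $C$ is positive semidefinite.

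Next I would diagonalize $C$: its eigenvalues $\mu_1,\dots,\mu_n$ are real and nonnegative, and applying the standard AM-GM inequality gives
\begin{equation*}
\bigl(\det AB\bigr)^{1/n}=(\det C)^{1/n}=\Bigl(\prod_{i=1}^n \mu_i\Bigr)^{1/n}\leq \frac{1}{n}\sum_{i=1}^n \mu_i=\frac{\tr C}{n}=\frac{\tr(AB)}{n},
\end{equation*}
which is \eqref{newton1}.

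Finally, for the equality case, AM-GM produces equality precisely when all $\mu_i$ coincide with a common value $\lambda\geq 0$. Because $C$ is symmetric, the equality of its eigenvalues is equivalent to $C=\lambda I$, i.e.\ $A^{1/2} B A^{1/2}=\lambda I$. Multiplying by $A^{-1/2}$ on both sides and then by $A$ on the left yields $AB=\lambda I$. Conversely, if $AB=\lambda I$ for some $\lambda\geq 0$ then both sides of \eqref{newton1} equal $\lambda$, completing the characterization. I do not anticipate any serious obstacle in this argument; the only point requiring a little care is the preservation of trace, determinant, and semidefiniteness under the similarity transformation, which is handled by the explicit identity above.
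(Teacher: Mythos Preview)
Your proof is correct and in fact somewhat cleaner than the paper's. The paper diagonalizes $B$ via an orthogonal matrix $U$, sets $Q=U^TAU$, and then uses \emph{two} inequalities: first AM--GM on the numbers $\lambda_i q_{ii}$, and then the Hadamard-type bound $\bigl(\prod_i q_{ii}\bigr)^{1/n}\geq(\det Q)^{1/n}$ for symmetric positive definite $Q$ (cited from \cite{Horn-Johnson}). The equality discussion therefore has to track both conditions: equal diagonal products and vanishing off-diagonal entries of $Q$. Your route, via the similarity $AB\sim A^{1/2}BA^{1/2}$, bypasses the second inequality entirely and reduces everything to a single application of AM--GM to the eigenvalues of a symmetric positive semidefinite matrix; the equality case then falls out immediately from $C=\lambda I$. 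Both arguments are short, but yours is the more economical of the two.
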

\begin{proof}
Our assumptions on $B$ ensure that there exist an orthogonal matrix $U$ and a diagonal matrix $\Lambda = {\rm diag} \{\lambda _1, \dots, \lambda _n\}$, where $\lambda _1, \dots, \lambda _n\geq 0$ are the eigenvalues of $B$, such that
$\Lambda = U^T B U$. Hence $B=U\Lambda U^T$.
\\ Set $Q= U^TAU$ and $Q=(q_{ij})$. Then
\begin{align}\label{newton2}
\frac{\tr (AB)}n&  = \frac{\tr (AU\Lambda U^T)}n=  \frac{\tr (Q\Lambda)}n = \frac{\sum_{i=1}^n\lambda_i q_{ii}}n \geq \big(\Pi_{i=1}^n \lambda_i q_{ii}\big)^{\frac 1n} =  \big(\Pi_{i=1}^n q_{ii}\big)^{\frac 1n}\big(\Pi_{i=1}^n \lambda_i\big)^{\frac 1n} 
\\ \nonumber & =  \big(\Pi_{i=1}^n q_{ii}\big)^{\frac 1n}  (\det B)^{\frac 1n} \geq (\det Q)^{\frac 1n} (\det B)^{\frac 1n} = (\det A)^{\frac 1n} (\det B)^{\frac 1n} =   (\det AB)^{\frac 1n}.
\end{align}
Note that the first inequality in \eqref{newton2} holds by the arithmetic-geometric mean inequality, with equality if and only if 
\begin{equation}\label{newton3}
\lambda_1q_{11} = \dots = \lambda_{nn}q_{nn}.
\end{equation}
Moreover, the second inequality holds since $Q$ is a symmetric positive definite matrix \cite[Theorem 7.8.1]{Horn-Johnson}, with equality if and only if
\begin{equation}\label{newton4}
q_{ij} = 0 \quad \text{if $i\neq j$.}
\end{equation}
Inequality \eqref{newton1} agrees with  \eqref{newton2}. 
\\ In particular, if equality holds in \eqref{newton1}, then both equations \eqref{newton3} and \eqref{newton4} hold. The latter implies that the matrix $Q\Lambda$ is diagonal. Coupling this piece of information with the former, we deduce that $Q\Lambda$ is a multiple of $I$. Therefore
\begin{align}\label{newton5}
U^TABU= U^TAUU^TBU = Q \Lambda = \frac{\tr (Q\Lambda)}n I = \frac{\tr (AB)}n I.
\end{align}
Hence, 
$AB = \lambda I$, with $\lambda =\frac{\tr (AB)}n$.  Conversely, if $AB = \lambda I$ for some $\lambda \geq 0$, then equality trivially  holds in \eqref{newton1}.
\end{proof}
 
 \section*{Data availability}
 
Data sharing not applicable to this article as no datasets were generated or analysed during the current study.

  \section*{Compliance with Ethical Standards}\label{conflicts}

\smallskip
\par\noindent 
{\bf Funding}. This research was partly funded by:   
\\ (i) Research Project 201758MTR2  of the Italian Ministry of Education, University and
Research (MIUR) Prin 2017 ``Direct and inverse problems for partial differential equations: theoretical aspects and applications'';   
\\ (ii) GNAMPA   of the Italian INdAM - National Institute of High Mathematics (grant number not available).

\smallskip
\par\noindent
{\bf Conflict of Interest}. The authors declare that they have no conflict of interest.



\begin{thebibliography}{99}

\bibitem[AFLT]{AFLT} A.Alvino, V.Ferone,  P.-L.Lions, \&
G.Trombetti,
Convex symmetrization and applications, \emph{Ann. Inst. H.
Poincar\'e Anal. Non Lineaire} {\bf 14} (1997), 275--293.

\bibitem[AB]{Amar-Bellettini} M.Amar \& G.Bellettini,  A notion of total variation depending on a metric with
  discontinuous coefficients, \emph{Ann. Inst. H. Poincar\'e Anal. Non Lin\'eaire} {\bf 11} (1994),  91--133.

\bibitem[AnCiFa]{AnCiFa} C.A.Antonini, G.Ciraolo \& A.Farina, Interior regularity results for inhomogeneous anisotropic quasilinear equations, arXiv:2112.09087





\bibitem[Bak]{Bakelman} I.J.Bakelman, \emph{Convex Analysis and Nonlinear Geometric Elliptic Equations},
Springer, Berlin, 1994.



\bibitem[BaEn]{BaEn} L.Barbu \& C.Enache, On a free boundary problem for a class of anisotropic equations,
\emph{Math. Methods Appl. Sci.}   {\bf 40} (2017),  2005--2012.



\bibitem[BeFeKa]{BeFeKa} M.Belloni, V.Ferone \& B.Kawohl,
 Isoperimetric inequalities, Wulff shape and related
questions for strongly nonlinear elliptic operators, \emph{Z. Angew.
Math. Phys (ZAMP)} {\bf 41} (2004), 439--505.


\bibitem[BiCi]{Bi-Ci} C.Bianchini \& G.Ciraolo, Wulff shape characterizations in overdetermined anisotropic elliptic problems,
\emph{Comm. Partial Differential Equations}   {\bf 43} (2018),  790--820. 

\bibitem[BiCiSa]{Bi-Ci-Sa} C.Bianchini, G.Ciraolo, \& P.Salani, An overdetermined problem for the anisotropic capacity,
\emph{Calc. Var. Partial Differential Equations }   {\bf 55} (2016),  no. 4, Art. 84, 24 pp.
%

 
\bibitem[BNST1]{BNST} B.Brandolini, C.Nitsch, P.Salani \& C.Trombetti,
 Serrin type overdetermined problems: an alternative proof,
\emph{Arch. Rat. Mech. Anal.} {\bf 190} (2008), 267--280.


\bibitem[BNST2]{BNST1} B.Brandolini, C.Nitsch, P.Salani \& C.Trombetti,
Stability of radial symmetry for a  Monge-Amp\`ere overdetermined problem,
\emph{Ann. Mat. Pura Appl.} {\bf 188} (2009), 445--453.






\bibitem[Br]{Brenier} Y.Brenier,
Polar factorization and monotone rearrangement of vector-valued functions, \emph{Comm. Pure Appl. Math.}  {\bf 44} (1991), 375–417. 
 


\bibitem[Ca]{Caff1992} L.Caffarelli,
The regularity of mappings with a convex potential, \emph{J. Amer. Math. Soc. } {\bf 5} (1992),
99--104.

 \bibitem[CiSa]{Ci-Sa} A.Cianchi \& P.Salani, Overdetermined anisotropic elliptic problems, \emph{Math. Ann.} {\bf 345}
(2009),  859--881.


\bibitem[CiFiRo]{CiFiRo} G.Ciraolo, A.Figalli \& A.Roncoroni, Symmetry results for critical anisotropic p-Laplacian equations in convex cones, \emph{Geom. Funct. Anal.} {\bf 30}
(2020),  770--803.







\bibitem[CoFaVa1]{CoFaVa1} M.Cozzi, A.Farina \& E.Valdinoci,
Gradient bounds and rigidity results for singular, degenerate, anisotropic partial differential equations,
\emph{Comm. Math. Phys.}  {\bf 331} (2014),
189--214. 


\bibitem[CoFaVa2]{CoFaVa2} M.Cozzi, A.Farina \& E.Valdinoci,
Monotonicity formulae and classification results for singular, degenerate, anisotropic PDEs,
\emph{Adv. Math.}  {\bf 293} (2016),
 343--381.


\bibitem[DePF]{DePh-Fi}
G.De Philippis \& A.Figalli,   $W^{2,1}$ regularity for solutions of the Monge-Amp\`ere equation, \emph{Invent. Math.}  {\bf 192} (2013), 55--69.

\bibitem[DePFS]{DFS}
G.De Philippis, A.Figalli \& O.Savin,   A note on interior $W^{2,1+\ep}$ estimates for the Monge-Amp\`ere equation,  \emph{Math. Ann.}  {\bf 357} (2013), 11--22.

\bibitem[DeMaMiNe]{DeMaMiNe}
M.G.Delgadino, F.Maggi, C.Mihaila \& R.Neumayer,    Bubbling with $L^2$-almost constant mean curvature and an Alexandrov-type theorem for crystals,  \emph{Arch. Rat. Mech. Anal. }  {\bf 230} (2018),  1131--1177.

\bibitem[DePGa]{DePGa}
F.Della Pietra \& N.Gavitone,  Symmetrization with respect to the anisotropic perimeter and applications,  \emph{Math. Ann.}  {\bf 363} (2015), 953--971
 
\bibitem[DePGaXi]{DePGaXi}
F.Della Pietra, N.Gavitone \& C.Xia,  Symmetrization with respect to mixed volumes,  \emph{Adv. Math.}  {\bf 388} (2021), Paper No. 107887, 31 pp.
 


\bibitem[DiPPoVa]{DiPPoVa}
S.Dipierro, G.Poggesi \& E.Valdinoci,   Radial symmetry of solutions to anisotropic and weighted diffusion equations with discontinuous nonlinearities,  \emph{Calc. Var. Partial Differential Equations }  {\bf 61} (2022), no. 2, Paper No. 72, 31 pp.
 
 
\bibitem[DRSV1]{DRSV1} F.Esposito, G.Riey, B.Sciunzi \& D.Vuono, Anisotropic Kelvin transform and the Sobolev anisotropic critical equation, arXiv:2203.14903v1.


\bibitem[DRSV2]{DRSV2} F.Esposito, G.Riey, B.Sciunzi \& D.Vuono, On a Kelvin-type transform for a class of anisotropic elliptic operators, arXiv:2203.14903v3.




%


%

\bibitem[FeK]{FeK} V.Ferone \& B.Kawohl,  Remarks on a Finsler-Laplacian,
\emph{Proc. Amer. Math. Soc.} {\bf 137} (2009), 247--253.

\bibitem[FaLi1]{FaLi1} M.Fazly \& Y.Li.,  Partial regularity and Liouville theorems for stable solutions of anisotropic elliptic
equations, \emph{Discrete Contin. Dyn. Syst.} {\bf 41} (2021), 4185--4206.

\bibitem[FaLi2]{FaLi2} M.Fazly \& Y.Li.,   Partial regularity for weak solutions of anisotropic Lane-Emden equation, \emph{ Proc. Amer.
Math. Soc.}  {\bf 150} (2022), 179--190.





%

%
%


\bibitem[HoJo]{Horn-Johnson} R.A.Horn \& C.R.Johnson, \emph{Matrix analysis},
Cambridge University Press, Cambridge, 1985.

 


\bibitem[KN]{KN}
B.Kawohl \& M.Novaga,  The $p$-Laplace eigenvalue problem as
$p\to 1$ and Cheeger sets in a Finsler metric, \emph{J. Convex Anal.}
{\bf 15} (2008), 623--634.




\bibitem[Ma]{Magn} R.Magnanini,  Alexandrov, Serrin, Weinberger, Reilly:   symmetry and stability by integral identities,  \emph{ Bruno Pini Math. Anal. Semin. } {\bf 8}, Univ. Bologna, Alma Mater Stud., Bologna, 2017, 121–141.



\bibitem[MaMi]{Marcus-Mizel} M.Marcus \& V.J.Mizel, Absolute continuity of tracks and mappings of Sobolev
spaces, \emph{Arch. Rat. Mech. Anal.} {\bf 45}  (1972),  294--320.

\bibitem[NiTr]{NiTr} C.Nitsch \& C.Trombetti,  The classical overdetermined Serrin problem,\emph{Complex Var. Elliptic Equ. } {\bf 63}  (2018),   1107--1122.

 

%



\bibitem[Ro]{Rock} R.T.Rockafellar, \emph{Convex analysis},
Princeton University Press, Princeton, N.J. 1970.

 
\bibitem[Sa-Yu]{Savin-Yu} O.Savin \& H.Yu, Global $W^{2,1+\varepsilon}$ estimates for Monge-Amp\`ere equation with natural boundary condition, \emph{J. Math. Pures Appl.} {\bf 137} (2020), 275--289. 



\bibitem[Schn]{Schneider} R.Schneider, \emph{Convex bodies: the Brunn-Minkowski theory},
Cambridge University Press, Cambridge, 1993.


\bibitem[Schm]{Schm} T.Schmidt, $W^{2,1+\ep}$ estimates  for the Monge-Amp\`ere equation, 
\emph{Adv. Math.} {\bf 240} (2013), 672--689.
 

\bibitem[Se]{Serrin} J.Serrin,  A symmetry problem in potential theory, \emph{Arch.
Rat. Mech. Anal.} {\bf 43} (1971), 304--318.

\bibitem[SiSi]{SiSi} L.Silvestre \& B.Sirakov, Overdetermined problems for fully nonlinear elliptic equations, \emph{Calc. Var. Partial Differential Equations} {\bf 54} (2015), 989–1007.

 
\bibitem[Ta]{Taylor} J.Taylor,  Crystalline variational problems, \emph{Bull. Amer. Math. Soc.}
{\bf 84} (1978), 568--588.



 

%


\bibitem[WaXi]{WaXi2} G.Wang \& C.Xia, Blow-up analysis of a Finsler-Liouville equation in two dimensions,  \emph{J. Diff.
Equat.} {\bf 252}  (2012), 1668--1700.





\bibitem[We]{We} H.F.Weinberger, Remark on the preceding paper of Serrin, \emph{Arch. Rat. Mech. Anal.}  {\bf 43}  (1971), 319--320.

\bibitem[Wu]{Wu} G.Wulff, Zur Frage der Geschwindigkeit
des Wachstums und der Aufl\"osung der Kristallfl\"aschen, \emph{Z.
Krist.} {\bf 34} (1901), 449--530.


\end{thebibliography}
\end{document}

Wang, Guofang; Xia, Chao A characterization of the Wulff shape by an overdetermined anisotropic PDE. Arch. Ration. Mech. Anal. 199 (2011), no. 1, 99–115.

When looking at their proof, we could 
not understand a final step at the end of section 4, where they assert 
that the point "x_0 must belong to the interior of C" (we had also 
tried an approach via a P-function, but we found some obstacles, 
including this one). In a reply message, they recognized that some 
approximation argument was needed (but I am not sure that this can be 
done). However, the final published version of their paper is the same 
as that of the preprint.

sovratedt

Silvestre, Luis; Sirakov, Boyan Overdetermined problems for fully nonlinear elliptic equations. Calc. Var. Partial Differential Equations 54 (2015), no. 1, 989–1007. 

 Bianchini, Chiara; Henrot, Antoine; Salani, Paolo An overdetermined problem with non-constant boundary condition. Interfaces Free Bound. 16 (2014), no. 2, 215–241.

 Brandolini, B.; Gavitone, N.; Nitsch, C.; Trombetti, C. Characterization of ellipsoids through an overdetermined boundary value problem of Monge-Ampère type. J. Math. Pures Appl. (9) 101 (2014), no. 6, 828–841. 

Dai, Qiuyi; Shi, Feilin Variational formula and overdetermined problem for the principal eigenvalue of the k-Hessian operator. J. Differential Equations 255 (2013), no. 11, 4136–4148. 

Fragalà, Ilaria Symmetry results for overdetermined problems on convex domains via Brunn-Minkowski inequalities. J. Math. Pures Appl. (9) 97 (2012), no. 1, 55–65.

sovradeterminati aniso

 Weng, Liangjun An overdetermined problem of anisotropic equations in convex cones. J. Differential Equations 268 (2020), no. 7, 3646–3664. 

Barbu, Luminita; Enache, Cristian On a free boundary problem for a class of anisotropic equations. Math. Methods Appl. Sci. 40 (2017), no. 6, 2005–2012.

Ciraolo, Giulio; Roncoroni, Alberto Serrin's type overdetermined problems in convex cones. Calc. Var. Partial Differential Equations 59 (2020), no. 1, Paper No. 28, 21 pp.

simmetria

 Farkas, Csaba; Kristály, Alexandru; Varga, Csaba Singular Poisson equations on Finsler-Hadamard manifolds. Calc. Var. Partial Differential Equations 54 (2015), no. 2, 1219–1241.

Wang, Guofang; Xia, Chao A Brunn-Minkowski inequality for a Finsler-Laplacian. Analysis (Munich) 31 (2011), no. 2, 103–115.

survey

\newpage

An archetypal result in the theory of elliptic PDE's asserts that,
for any bounded open set $\Omega$ in $\rn$, $n\geq 2$, there
exists a unique weak solution $u$ in the Sobolev space
$W^{1,2}_0(\Omega)$ to the Dirichlet problem
\begin{equation}\label{eq81}
\left\{\begin{array}{ll} -\Delta u=1\quad&\text{in }\Omega\\
\\
u=0\quad&\text{on }\bd\Omega\,
\end{array}
\right.
\end{equation}
for the Laplace operator $\Delta$. One proof of this statement
leans upon the fact
that (\ref{eq81}) is the Euler equation of the minimum problem
\begin{equation}\label{eq82}
\min_{u\in
W^{1,2}_0(\Omega)}\int_\Omega\left(\frac{1}{2}|\Du|^2-u\right)\,dx\,,
\end{equation}
involving a strictly convex and differentiable functional. Here,
$|\xi|$ denotes the Euclidean norm of $\xi\in\rn$, and $\nabla$
stands for the gradient operator with respect to the $x$ variable.
\par\noindent
In fact, problems (\ref{eq81}) and (\ref{eq82}) are completely
equivalent, and the latter admits a (unique) solution, since the
functional is trivially weakly lower semicontinuous and coercive
in $W^{1,2}_0(\Omega)$.
\par
It is well-known that the boundary condition on $u$ in problem
\eqref{eq81} can be generalized to  nonhomogeneous Dirichlet
conditions, and replaced by  analogous Neumann conditions, or even
by  mixed boundary conditions. However, Dirichlet and Neumann
conditions cannot be imposed simultaneously, unless they are
suitably related to the geometry of $\Omega$. In this connection,
a celebrated result by Serrin \cite{Serrin} entails that if the
solution $u$ to (\ref{eq81}) happens to fulfill the additional
boundary condition
\begin{equation}\label{eq83}
|\Du|=C\quad\text{ on }\bd\Omega
\end{equation}
for some $C>0$, and $\bd\Omega$ is sufficiently smooth, then
$\Omega$ is necessarily a ball and
$$u(x)=\frac{r^2-|x|^2}{2n}$$ for some $r>0$ (up to translations).

Starting with the paper by Weinberger \cite{We}, a number of
results on overdetermined problems soon  appeared in the wake of
\cite{Serrin},  dealing with alternate proofs, extensions to more
general (nonlinear) differential operators, and variants of the
boundary conditions. A partial list of papers on this and related
topics includes \cite{Alessandrini-Garofalo, BNST, BNST2,
Farina-Kawohl, Fragala-Kawohl-Gazzola, Garofalo-Lewis,
Payne-Philippin, Philippin, Schaefer}.
\par
The purpose of the present paper is to merge  Serrin's  theorem in
a general symmetry principle for solutions to overdetermined
elliptic problems, where the relevant symmetry is not necessarily
the spherical one.
The underlying idea of our contribution is that a symmetry result
holds for  any overdetermined problem involving an anisotropic
elliptic operator with quadratic growth, from a suitable class,
provided that the additional boundary condition imposed on the
gradient of the solution matches the structure of the differential
operator. The resulting symmetry of the domain and of the solution
reflects, in turn, the (anisotropic) symmetry of the operator.
Besides extending it, we hope that our conclusions will provide
further insight on Serrin's original result, on clarifying the
interplay between differential and geometric properties in
overdetermined problems.
\par
Let us mention that the study of problems with the kind of
anisotropy  considered in this paper has a quite long history in
the calculus of variations, and goes back at least to Wulff
\cite{Wu}, who employed anisotropic geometric functionals
in the mathematical theory of crystals. More recent developments
can be found in \cite{Amar-Bellettini, Dacorogna,
Esposito-Fusco-Trombetti, Fonseca, Fonseca-Muller, Taylor}. Other
related variational problems and  PDE's
are contained e.g. in \cite{Alvino-Ferone-Lions-Trombetti,
Aronson-Crandall, Belloni-Ferone-Kawohl, Cordero-Nazaret-Villani,
GGH, GigaR}.